\documentclass[11pt,a4paper]{article}

\usepackage[hmargin={25mm,25mm},vmargin={25mm,25mm}]{geometry}
\usepackage[utf8]{inputenc}
\usepackage{latexsym}
\usepackage{amssymb}
\usepackage{amsmath}
\usepackage{amsfonts}
\usepackage{mathtools}
\usepackage{commath}
\usepackage{amsthm}
\usepackage{bbm}
\usepackage{csquotes}
\usepackage{theoremref}
\usepackage{gensymb}

\usepackage{amsthm, mathtools, graphicx, paralist, cite, color,verbatim}
\usepackage{comment}
\usepackage{hyperref}

\usepackage{amsmath, nccmath} 
\usepackage{amsthm}
\usepackage{indentfirst}
\usepackage{appendix}
\usepackage{listings}
\usepackage{hyphenat}
\usepackage{physics}
\usepackage{calrsfs}
\usepackage{enumerate}
\usepackage{tikz-cd}
\usepackage{mathrsfs}

\theoremstyle{plain}
\newtheorem{theorem}{Theorem}[section]
\newtheorem{lemma}[theorem]{Lemma}

\theoremstyle{definition}

\begin{document}

 \title{A note on connectivity in directed graphs}
 
 \author{Stelios Stylianou\footnote{School of Mathematics, University of Bristol. Supported by an EPSRC Doctoral Training Studentship.}}
\date{September 2024}
\maketitle

\begin{abstract}
    We say a directed graph $G$ on $n$ vertices is {\em irredundant} if the removal of any edge reduces the number of ordered pairs of distinct vertices $(u,v)$ such that there exists a directed path from $u$ to $v$. We determine the maximum possible number of edges such a graph can have, for every $n \in \mathbb{N}$. We also characterize the cases of equality. This resolves, in a strong form, a question of Crane and Russell.
\end{abstract}

\section{Introduction}

In this paper, a {\em directed graph} $G$ is a pair of finite sets $(V,E)$ such that $E \subset \{(u,v):\ u,v \in V,\ u \neq v\}$; as usual, the set $V = V(G)$ is called the {\em vertex-set} of $G$, and the set $E = E(G)$ is called the {\em edge-set} of $G$. (So, we do not allow loops at any vertex, or multiple edges going between two vertices in the same direction, but we do allow two edges between the same pair of vertices in different directions.) We write $e(G):=\abs{E(G)}$ for the number of edges of $G$, also known as the {\em size} of $G$.

As usual, if $(u,v) \in E(G)$ then we say there is a {\em directed edge from $u$ to $v$}. We will also write $u \xrightarrow{G} v$ if there is any directed path from $u$ to $v$. When it is clear what our graph is from the context, we will sometimes simply write $V$ for the vertex set, $E$ for the edge set, and $u \rightarrow v$ to indicate that there is a directed path from $u$ to $v$. We will sometimes refer to a directed path simply as a {\em path}, for brevity.

We say that a directed graph $G$ is \emph{irredundant} if the following holds: for any edge $e \in E(G)$, there exist $u,v \in V(G)$ such that $u \xrightarrow{G} v$ holds, but not $u \xrightarrow{G \setminus e} v$. Note that this is equivalent to saying that, for any $(x,y) \in E(G)$, there is no other path from $x$ to $y$, except for the edge $(x,y)$. (Indeed, if the latter holds, then $G$ is clearly irredundant. Conversely, suppose $G$ is irredundant and let $e=(x,y) \in E(G)$. Fix $u,v \in V$ such that $u \xrightarrow{G} v$, but not $u \xrightarrow{G \setminus e} v$. Then the path from $u$ to $v$ in $G$ certainly includes $e$, so it is of the form $u\ldots xy \ldots v$. Let $P_1$ and $P_2$ be the paths used to go from $u$ to $x$ and from $y$ to $v$ respectively. If there exists a path $P$ from $x$ to $y$ that does not include $e$, then it is possible to go from $u$ to $v$ in $G \setminus e$ by concatenating $P_1$, $P$ and $P_2$, a contradiction.)

We say that an edge $e=(u,v)$ is \emph{bad} if $u \xrightarrow{G \setminus e} v$, so $G$ being irredundant is equivalent to $G$ having no bad edges. For $n \in \mathbb{N}$, we write $f(n)$ for the maximum number of edges in an irredundant graph on $n$ vertices. Note that $f(1)=0$ and $f(2)=2$. We say that a graph $G$ on $n$ vertices is \emph{maximum irredundant} if $e(G)=f(n)$.

Crane and Russell (personal communication) raised the natural question of determining, for each $n \in \mathbb{N}$, the maximum possible number of edges in an irredundant directed graph on $n$ vertices. Our purpose in this paper is to answer this question, and also to characterize the extremal graphs. We prove the following.

\begin{theorem}
\label{thm:main}

For any $n \in \mathbb{N}$, the number of edges $f(n)$ in a maximum irredundant graph on $n$ vertices satisfies
$$f(n)= \begin{cases*}
2n-2 & for $n\leq 7$, \\
\lfloor n^2/4 \rfloor & for $n \geq 7.$    
\end{cases*}$$

For $n<7$, $G$ is maximum irredudant if and only if $G$ is a \enquote{double tree}, that is, a tree where all the edges are double. For $n>7$, $G$ is maximum irredundant if and only if $G$ is a  simple complete bipartite graph with part-sizes differing by at most one. (Here, \emph{simple complete bipartite} means that $G$ has no double edges and we can split its vertices into two parts, $U$ and $V$, such that $(x,y) \in E(G)$ if and only if $x \in U$ and $y \in V$.) For $n=7$, $G$ is maximum irredudant if and only if $G$ is either a double tree or a simple complete bipartite graph with part-sizes differing by at most one.
\end{theorem}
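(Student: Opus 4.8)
The plan is to analyse an irredundant graph $G$ through its strongly connected components (SCCs), treating edges inside a single SCC and edges between distinct SCCs separately; I will use freely that any subgraph of an irredundant graph is again irredundant, since deleting edges or vertices cannot create new paths. Let $C_1,\dots,C_m$ be the SCCs of $G$ and let $\widehat G$ be the condensation --- the DAG on $\{C_1,\dots,C_m\}$ with an arc $C_i\to C_j$ whenever $G$ has an arc from $C_i$ to $C_j$. I would first show $\widehat G$ has no ``shortcuts'': if $\widehat G$ has an arc $C_i\to C_j$ and also a directed path of length $\ge 2$ from $C_i$ to $C_j$, then lifting that path to $G$ and splicing in strongly connected detours through the intermediate SCCs yields a directed path in $G$ which, being confined to a disjoint union of SCCs, avoids (hence differs from) the chosen arc from $C_i$ to $C_j$; that arc is therefore bad, a contradiction. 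In particular the underlying graph of $\widehat G$ is triangle-free, and essentially the same splicing argument shows there is exactly one arc of $G$ between each adjacent ordered pair of SCCs; hence the number of between-SCC edges of $G$ equals $e(\widehat G)\le\lfloor m^2/4\rfloor$ by Mantel's theorem. Separately, each $G[C_i]$ is strongly connected and irredundant; taking an ear decomposition (an opening cycle together with ears, each a directed path or cycle meeting the previous part only in its endpoint(s)), no ear can be a single arc --- adding an arc between two vertices of an already strongly connected subgraph creates a second path between them, a bad edge. So the opening cycle uses $\ge 2$ vertices and each ear contributes $\ge 1$ new vertex, giving $e(G[C_i])\le 2|C_i|-2$ (also valid when $|C_i|=1$).

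Summing over $i$, $e(G)\le 2n-2m+\lfloor m^2/4\rfloor$. Regarded as a function of $m\in\{1,\dots,n\}$ the right-hand side decreases and then increases, so (checking small cases) its maximum is at an endpoint: $2n-2$ at $m=1$, and $\lfloor n^2/4\rfloor$ at $m=n$. Thus $f(n)\le\max(2n-2,\lfloor n^2/4\rfloor)$, which equals $2n-2$ for $n\le 7$ and $\lfloor n^2/4\rfloor$ for $n\ge 7$, with the two values coinciding at $n=7$. The matching lower bounds are the constructions in the statement: a double tree is irredundant (its underlying tree has a unique path between any two vertices) with $2n-2$ edges, and a balanced simple complete bipartite graph is irredundant (every directed path has length $1$) with $\lfloor n^2/4\rfloor$ edges.

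For the characterization one traces equality through the above chain. If $n<7$, the value $2n-2$ is attained only at $m=1$, so $G$ is strongly connected with $2n-2$ edges; using that a strongly connected graph is irredundant iff it is minimally strongly connected --- i.e. every edge is a strong bridge, which one verifies directly --- equality in the ear-decomposition count forces the opening cycle to be a $2$-cycle and every ear to have length exactly $2$, and one rules out the ``path-ears'' $u\to x\to w$ (the pre-existing $u$-to-$w$ path together with a strongly connected detour would exhibit a bad edge), leaving precisely the double trees. If $n>7$, equality forces $m=n$, so $G$ is a DAG with $\lfloor n^2/4\rfloor$ edges; equality in Mantel makes its underlying graph $K_{\lceil n/2\rceil,\lfloor n/2\rfloor}$, and one must show its acyclic orientation sends all edges from one part to the other, by exhibiting a bad edge in any other orientation. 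At $n=7$ both $m=1$ and $m=n$ remain possible, which is exactly why both families appear.

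The edge-count bound falls out cleanly from the SCC decomposition, Mantel's theorem, and the ear decomposition, so I expect the real difficulty to be the equality analysis: showing that the only minimally strongly connected digraphs with $2n-2$ edges are double trees, and that the only irredundant acyclic orientations of $K_{\lceil n/2\rceil,\lfloor n/2\rfloor}$ are the one-directional ones, each appears to require a careful structural (likely inductive) argument ruling out the ``wrong'' ears and the ``wrong'' orientations --- that, rather than the numerics, is where I expect the main work to lie.
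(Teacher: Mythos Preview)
Your approach is correct for the upper bound and genuinely different from the paper's. The paper proceeds by an inductive contraction: if $G$ has a double edge, contract it to get $e(G)\le f(n-1)+2$; if not but $G$ has a triangle (necessarily a directed $3$-cycle), contract it to get $e(G)\le f(n-2)+3$; otherwise $G$ is triangle-free and Mantel applies directly. This yields the recurrence $f(n)\le\max\{f(n-1)+2,\,f(n-2)+3,\,\lfloor n^2/4\rfloor\}$, which is then unwound. Your SCC decomposition gives the closed-form bound $e(G)\le 2n-2m+\lfloor m^2/4\rfloor$ in one shot, which is cleaner and more structural; Mantel enters only on the condensation rather than on $G$ itself.

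For the characterization the two routes diverge more. The paper re-uses its contraction: if $G$ has a double edge and $e(G)=2n-2$, contracting gives a maximum irredundant graph on $n-1$ vertices, inductively a double tree, and one checks the pre-image is a double tree. For $n\ge 7$ the paper proves a separate lemma (by explicit case analysis on three vertices in each part) that an irredundant orientation of $K_{\lceil n/2\rceil,\lfloor n/2\rfloor}$ has no directed path of length three, forcing the one-directional orientation. Your equality trace is correct in spirit---$m=1$ with all length-$2$ ears, or $m=n$ with Mantel equality---but the parenthetical argument for excluding a path-ear $u\to x\to w$ does not work as written: the ear creates a second $u$--$w$ \emph{walk}, not a second path parallel to any single edge, and later ears can attach at $x$, so no edge is obviously bad. (For instance, adding $u\to x\to w$ to a strongly connected $H$ need not make any edge of $H$ bad; what goes wrong only appears after one accounts for the remaining ears and the exact edge count.) You rightly flag both this and the orientation question as the real work; the paper's contraction handles the first and its bipartite lemma the second, so that is where your write-up would need either those arguments or substitutes.
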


Our proof is a simple, combinatorial one, using induction on the number of vertices to get upper bounds on $f(n)$. We will get various bounds depending on certain features of our graph.

The main idea is to find a directed cycle $C$ of length two or three in $G$ (that is, either a double edge or a directed triangle) and then contract the vertices of this cycle into a single vertex $x$ to obtain a new graph $G'$. The vertex $x$ essentially plays the same role in $G'$ as the cycle $C$ used to play in $G$. More precisely, we observe that $G$ being irredundant implies that, for any vertex $y \in G \setminus C$, at most one edge can exist from $y$ to $C$ and at most one edge can exist from $C$ to $y$. This will allow us to add the corresponding edge from $y$ to $x$ or from $x$ to $y$ in our new graph, without affecting the total number of edges (apart from the few edges removed when contracting $C$ into $x$).

Depending on whether $G$ contains a double edge or a directed triangle, we get upper bounds on $f(n)$ depending on $f(n-1)$ and $f(n-2)$ respectively (as we reduce the number of vertices by one and two respectively), which give linear bounds. In the remaining cases, we use Mantel's theorem \cite{Mantel} to deduce that $f(n) \leq \lfloor n^2/4 \rfloor$.


\section{Determining the maximal number of edges in an irredundant graph.}

We will use the following classical theorem of Mantel \cite{Mantel} for undirected graphs. (As usual, an {\em undirected graph} is a pair of finite sets $(V,E)$ where $E \subset \{\{u,v\}:\ u,v \in V,\ u \neq v\}$, i.e., $E$ is a set of unordered pairs of elements of $V$; the set $V$ is referred to as the {\em vertex-set}, and $E$ is referred to as the {\em edge-set}.)

\begin{theorem}
Let $n \in \mathbb{N}$, and let $G$ be a triangle-free, undirected graph on $n$ vertices. Then $e(G) \leq \lfloor n^2/4 \rfloor$. Equality holds if and only if $G$ is a complete bipartite graph where the two parts have sizes that differ by at most one.    
\end{theorem}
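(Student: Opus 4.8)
The plan is to prove the bound and the equality characterization simultaneously by induction on $n$, at each step deleting the two endpoints of an edge. (For the bound alone one could instead use the one-line Cauchy--Schwarz argument $\sum_{v} d(v)^2 = \sum_{uv \in E}(d(u)+d(v)) \le n\,e(G)$ together with $\sum_v d(v)^2 \ge (2e(G))^2/n$, but the inductive proof also delivers the extremal graphs.) The base cases $n \le 3$ are checked directly: for $n \le 1$ there are no edges; for $n=2$ a triangle-free graph has at most $1 = \lfloor 4/4 \rfloor$ edge, with equality exactly for $K_{1,1}$; and for $n=3$ it has at most $2 = \lfloor 9/4 \rfloor$ edges, with equality exactly for $K_{1,2}$.

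For the inductive step, let $G$ be triangle-free on $n \ge 4$ vertices. If $G$ is edgeless then $e(G) = 0 < \lfloor n^2/4 \rfloor$ and there is nothing to prove, so pick an edge $uv$. Triangle-freeness forces $N(u) \cap N(v) = \emptyset$, hence $d(u)+d(v) = \abs{N(u)\cup N(v)} \le n$, so the number of edges meeting $\{u,v\}$ equals $d(u)+d(v)-1 \le n-1$ (only $uv$ is double-counted). Deleting $u$ and $v$ leaves a triangle-free graph on $n-2$ vertices, so by induction $e(G-u-v) \le \lfloor (n-2)^2/4 \rfloor = \lfloor n^2/4 \rfloor - (n-1)$, using that $(n-2)^2/4 = n^2/4 - (n-1)$ with $n-1$ an integer. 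Adding the at most $n-1$ deleted edges gives $e(G) \le \lfloor n^2/4 \rfloor$.

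For equality all three estimates must be tight: $d(u)+d(v) = n$, and $G-u-v$ is extremal on $n-2 \ge 2$ vertices, hence by induction a complete bipartite graph $K_{X,Y}$ whose two parts have sizes differing by at most one (with $X,Y$ both nonempty, since an edgeless graph on at least two vertices is not extremal). Then $N(u)\setminus\{v\}$ and $N(v)\setminus\{u\}$ are disjoint subsets of $X \cup Y$ with total size $(d(u)-1)+(d(v)-1) = n-2 = \abs{X\cup Y}$, so they partition $X \cup Y$; moreover each is an independent set of $G$ (a repeated neighbour of $u$, or of $v$, would create a triangle), hence is contained entirely in $X$ or entirely in $Y$. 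Two subsets of $X$ (or two subsets of $Y$) cannot partition $X \cup Y$, so we may assume $N(u)\setminus\{v\} = X$ and $N(v)\setminus\{u\} = Y$. Checking all pairs then shows $G$ is exactly the complete bipartite graph with parts $X \cup \{v\}$ and $Y \cup \{u\}$, whose sizes differ by at most one. Conversely such a graph is triangle-free with exactly $\lfloor n^2/4 \rfloor$ edges, completing the characterization.

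The main obstacle is the equality analysis, and in particular the step that uses triangle-freeness a \emph{second} time: a neighbour of $u$ lying in $X$ together with a neighbour of $u$ lying in $Y$ would be joined by an edge of $K_{X,Y}$ and hence form a triangle with $u$, which is exactly what forces $N(u)\setminus\{v\}$ to lie inside a single part. A minor bookkeeping point is the handling of the cases $n \le 3$ and of the edgeless graph, where ``complete bipartite'' must be read as allowing a possibly empty part and the bound is strict.
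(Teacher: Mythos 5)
Your proof is correct, but note that the paper does not actually prove this statement at all: it is Mantel's classical theorem, which the author simply quotes with a citation and uses as a black box (both the bound, in the triangle-free case of Lemma \ref{main_lemma}, and the equality characterization, in the classification of the extremal graphs for $n \geq 7$). So there is no proof in the paper to compare against; what you have supplied is a standard, self-contained argument. Your route is the delete-an-edge's-endpoints induction: triangle-freeness gives $N(u)\cap N(v)=\emptyset$, hence $d(u)+d(v)\leq n$, so at most $n-1$ edges meet $\{u,v\}$, and the identity $\lfloor (n-2)^2/4\rfloor = \lfloor n^2/4\rfloor-(n-1)$ closes the induction. The equality analysis is the delicate part and you handle it correctly: tightness forces $G-u-v$ to be extremal, the sets $N(u)\setminus\{v\}$ and $N(v)\setminus\{u\}$ partition the remaining vertices, and a second application of triangle-freeness makes each of them independent, hence contained in a single part of $K_{X,Y}$; since two subsets of one part cannot cover both (nonempty) parts, they must be $X$ and $Y$ themselves, reconstituting a balanced complete bipartite graph. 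The degenerate cases (edgeless graphs, $n\leq 3$, an empty $N(u)\setminus\{v\}$) are all covered by your remarks. The alternative Cauchy--Schwarz argument you mention would give only the inequality, and the paper genuinely needs the uniqueness statement, so the inductive proof is the right choice here.
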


We first obtain upper bounds on $f(n)$ in three different cases:
\begin{itemize}
    \item $G$ contains some double edge.
    \item $G$ contains no double edge, but contains some triangle.
    \item $G$ contains neither a double edge nor a triangle.
\end{itemize}
Mantel's theorem will be useful in the third case.

\begin{lemma}
\label{main_lemma}
For any $n \geq 3$, $$f(n) \leq \max \{f(n-1)+2, f(n-2)+3, \lfloor n^2/4 \rfloor\}.$$
\end{lemma}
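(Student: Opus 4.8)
The plan is to split into the three cases suggested by the paper's outline, using the contraction idea described in the introduction. Throughout, let $G$ be an irredundant graph on $n$ vertices with $e(G) = f(n)$.

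\textbf{Case 1: $G$ contains a double edge.} Suppose $(a,b), (b,a) \in E(G)$, and let $C = \{a,b\}$. I would form $G'$ by contracting $a,b$ into a single new vertex $x$: the vertex set is $(V(G) \setminus \{a,b\}) \cup \{x\}$, and I keep all edges of $G$ not incident to $\{a,b\}$, while for each $y \notin C$ I put an edge $(y,x)$ in $G'$ iff there was an edge from $y$ into $C$ in $G$, and an edge $(x,y)$ iff there was an edge from $C$ to $y$ in $G$. The key structural claim, which follows from irredundancy as indicated in the introduction, is that for each $y \notin C$ there is \emph{at most one} edge from $y$ to $C$ and at most one edge from $C$ to $y$ in $G$ — otherwise (say edges $(y,a)$ and $(y,b)$ both present) the edge $(y,b)$ would be bad via the path $y \to a \to b$. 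Hence $e(G') \ge e(G) - 2$ (we lose exactly the two edges $(a,b),(b,a)$ and no others, since every $C$–$y$ or $y$–$C$ edge is accounted for injectively). Then I must check $G'$ is still irredundant on $n-1$ vertices: a bad edge in $G'$ would lift to a bad edge in $G$ (a path avoiding a given edge of $G'$ pulls back to a path in $G$ avoiding the corresponding edge, possibly routing through the double edge $C$ in either direction). This gives $f(n) \le f(n-1) + 2$.

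\textbf{Case 2: $G$ has no double edge but contains a directed triangle} $C = \{a,b,c\}$ with edges $(a,b),(b,c),(c,a)$. Contract $C$ to a single vertex $x$ in the same fashion, obtaining $G'$ on $n-2$ vertices. Again irredundancy forces at most one edge from each $y \notin C$ into $C$ and at most one out of $C$ to $y$: if, say, $(y,a)$ and $(y,b)$ are both present, then $(y,b)$ is bad via $y \to a \to b$. So contracting loses only the three triangle edges and identifies the remaining cross-edges injectively, giving $e(G') \ge e(G) - 3$. After checking $G'$ is irredundant (same lifting argument: any path in $G'$ avoiding an edge pulls back to a path in $G$ that can traverse $C$ along the directed cycle as needed), we get $f(n) \le f(n-2) + 3$.

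\textbf{Case 3: $G$ has no double edge and no directed triangle.} Since $G$ has no double edge, its underlying undirected graph $H$ (forget orientations) has the same number of edges, $e(H) = e(G)$. I claim $H$ is triangle-free: a triangle in $H$ lifts to three vertices with three edges among them in $G$, one in each direction across each pair; up to relabeling this is either a directed $3$-cycle (excluded) or a ``transitive'' triangle with edges $(a,b),(b,c),(a,c)$ — but then $(a,c)$ is bad via $a\to b\to c$, contradicting irredundancy. So $H$ is triangle-free, and Mantel's theorem gives $e(G) = e(H) \le \lfloor n^2/4 \rfloor$.

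Combining the three cases yields $f(n) \le \max\{f(n-1)+2,\ f(n-2)+3,\ \lfloor n^2/4\rfloor\}$. The main obstacle I anticipate is the verification that the contracted graph $G'$ is genuinely irredundant — i.e. carefully pushing a hypothetical bad edge of $G'$ back down to a bad edge of $G$, handling the bookkeeping of which original edge a $G'$-edge at $x$ corresponds to, and making sure paths through $x$ lift to honest directed paths through the cycle $C$ (they do, since $C$ is a directed cycle, so one can enter $C$ at any vertex and leave at any vertex). The at-most-one-edge observations in Cases 1 and 2 are the other place to be careful, but these follow directly from the "no bad edge" reformulation already established in the excerpt.
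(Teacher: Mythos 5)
Your proposal is correct and follows essentially the same route as the paper: contract a double edge (losing $2$ edges) or a directed triangle (losing $3$), verify the contraction preserves irredundancy, and apply Mantel's theorem in the remaining case. The only cosmetic difference is that the paper rules out transitive triangles inside its triangle case rather than in the Mantel case, which changes nothing.
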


\begin{proof}
Let $G$ be an irredundant graph on $n$ vertices. Suppose first that $G$ contains some double edge, that is there exist $u,v \in V$ such that $(u,v), (v,u) \in E$. Observe that, for any other vertex $w$, we cannot have both $(w,u) \in E$ and $(w,v) \in E$. Indeed, in that case $wuv$ is a path and thus $(w,v)$ is a bad edge. Similarly, we cannot have both $(u,w) \in E$ and $(v,w) \in E$. We now remove vertices $u$ and $v$ to get a graph $H$. Then we add a new vertex $x$ and for any $y \in V(H)$ we add the edge $(x,y)$ if and only if either $(u,y)$ or $(v,y)$ was an edge in $G$. Similarly, we add the edge $(y,x)$ if and only if either $(y,u)$ or $(y,v)$ was an edge in $G$. Let $G'$ be the resulting graph. Note that $e(G')=e(G)-2$.

We show that $G'$ is also irredundant. Suppose $(a,b)$ is bad in $G'$. Fix some path $P$ from $a$ to $b$ in $G' \setminus (a,b)$. We will show that $(a,b)$ is bad in $G$. This is trivially true if $P$ does not pass through $x$. If $P$ passes through $x$, then we have paths $P_1$ from $a$ to $x$ and $P_2$ from $x$ to $b$. By the definition of $x$, this means that in $G \setminus (a,b)$ there exist paths $P_1'$ from $a$ to $c_1$ and $P_2'$ from $c_2$ to $b$, where each $c_i$ is equal to either $u$ or $v$. The existence of these two paths, together with the fact that $uv$ is a double edge, shows that there is a path from $a$ to $b$ in $G \setminus (a,b)$. Therefore, we have $e(G) = e(G')+2 \leq f(n-1)+2$.

We may therefore assume, henceforth, that $G$ contains no double edge. Suppose $G$ contains some (not necessarily directed) triangle on vertices $u_1, u_2$ and $u_3$. Without loss of generality, $(u_1,u_2), (u_2,u_3) \in E$. Since $G$ is irredundant, we cannot have $(u_1,u_3) \in E$. Therefore, $(u_3,u_1) \in E$. Observe that, for any other vertex $v$, we cannot have both $(u_i,v) \in E$ and $(u_j,v) \in E$ for distinct $i,j$. Similarly, we cannot have both $(v,u_i) \in E$ and $(v,u_j) \in E$ for distinct $i,j$. We now remove vertices $u_1, u_2$ and $u_3$ to get a graph $H$. Then we add a vertex $x$ and for any $y \in V(H)$ we add the edge $(x,y)$ if and only if $(u_i,y)$ was an edge in $G$ for some $i$. Similarly, we add the edge $(y,x)$ if and only if $(y,u_i)$ was an edge in $G$ for some $i$. Let $G'$ be the resulting graph. Note that $e(G')=e(G)-3$.

We show that $G'$ is also irredundant. Suppose $(a,b)$ is bad in $G'$. Similar to the case where $G$ was allowed to have double edges, we deduce that in $G \setminus (a,b)$ there exist paths $P_1'$ from $a$ to $c_1$ and $P_2'$ from $c_2$ to $b$, where each $c_i$ is equal to $u_j$ for some $j$. The existence of these two paths, together with the fact that $u_1,u_2,u_3$ form a directed cycle, shows that there is a path from $a$ to $b$ in $G \setminus (a,b)$. We have $e(G) = e(G')+3 \leq f(n-2)+3$.

Finally, we might assume that $G$ contains no triangles. Then $e(G) \leq \lfloor n^2/4 \rfloor$ by Mantel's theorem.
\end{proof}

Before proving our main result, we first prove a lemma that will help us characterize the maximum-sized irredundant graphs on at least seven vertices.

\begin{lemma}
\label{lemma_simple}
    Let $G$ be an irredundant complete bipartite graph on $n \geq 7$ verices. Then $G$ is simple.
\end{lemma}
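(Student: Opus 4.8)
The plan is to prove the equivalent statement that an irredundant complete bipartite graph (with $n\ge 7$ vertices) contains no double edge, by contradiction. Write $U,V$ for the two parts, so that every edge of $G$ runs between $U$ and $V$ and, since $G$ is complete bipartite, $(u,v)\in E(G)$ for every $u\in U$, $v\in V$ (the ``forward'' edges); then ``$G$ is simple'' just says there is no ``backward'' edge $(v,u)$ with $u\in U$, $v\in V$. Assume for contradiction that $(v_0,u_0)\in E(G)$ for some $u_0\in U$, $v_0\in V$; together with the forward edge $(u_0,v_0)$ this forms a double edge. Since $n\ge 7$ and both parts are nonempty, I will work in the case $\lvert U\rvert\ge 2$, $\lvert V\rvert\ge 2$ (the degenerate singleton case is flagged below) and fix vertices $u'\in U\setminus\{u_0\}$ and $v'\in V\setminus\{v_0\}$.

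The key observation is that the double edge $u_0\leftrightarrow v_0$ lets a directed path ``turn around'' there. All three of $(u',v_0)$, $(v_0,u_0)$ and $(u_0,v')$ lie in $E(G)$: the first and last are forward edges of the complete bipartite graph, the middle one is the backward edge we assumed. So $u'\to v_0\to u_0\to v'$ is a directed path, its four vertices $u',v_0,u_0,v'$ are pairwise distinct, and none of its edges equals $(u',v')$. But $(u',v')$ is itself a (forward) edge of $G$, so this is a second way of getting from $u'$ to $v'$ and the edge $(u',v')$ is bad, contradicting irredundancy. If instead one reads ``complete bipartite'' as only requiring the underlying undirected graph to be $K_{\lvert U\rvert,\lvert V\rvert}$, so that each cross pair has at least one of its two directed edges, the same U-turn idea works: pass to the induced subgraph on $\{u_0,u',v_0,v'\}$ — an induced subgraph of an irredundant graph is irredundant, since an alternative path inside the subgraph is also an alternative path in $G$ — which leaves a short finite check that every orientation of $K_{2,2}$ carrying one double edge has a bad edge.

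The one point needing care, and the reason a size hypothesis is present at all, is the existence of the auxiliary vertices $u'$ and $v'$: the argument genuinely uses $\lvert U\rvert\ge 2$ and $\lvert V\rvert\ge 2$. For $\min(\lvert U\rvert,\lvert V\rvert)=1$ the conclusion actually fails — a ``double star'' (a star all of whose edges are doubled) is irredundant and yet not simple — so that shape must be excluded, and the statement cannot be weakened to small $n$. Apart from this boundary consideration, I expect no computational difficulty: once the turning path is written down, the bad edge is immediate.
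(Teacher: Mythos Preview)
Your reading of the hypotheses is off, and this is where the real gap lies. In the paper a directed graph is \emph{complete bipartite} when its \emph{underlying undirected} graph is $K_{|U|,|V|}$: every cross-pair $\{u,v\}$ carries at least one of the two directed edges, but there is no assumption that the ``forward'' edge $(u,v)$ is the one present. Correspondingly, the conclusion ``$G$ is simple'' (cf.\ the defined term \emph{simple complete bipartite} in Theorem~\ref{thm:main}) means that \emph{all} edges are oriented from $U$ to $V$; it is not merely the absence of double edges. So the task is to rule out any mixed orientation of $K_{|U|,|V|}$, not just to rule out a backward edge sitting on top of a forward one. Your U-turn path $u'\to v_0\to u_0\to v'$ uses three edges $(u',v_0)$, $(u_0,v')$, $(u',v')$ that you are not given, and your fallback finite check on an induced $K_{2,2}$ still only targets the double-edge situation; even if that check succeeds it says nothing about orientations with no double edge but with some pairs pointing $U\to V$ and others $V\to U$.

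The paper's argument is accordingly quite different in shape. It fixes three vertices in each part --- this is where $n\ge 7$ is genuinely used, since the balanced parts coming from Mantel each have size at least $3$, not the $|U|,|V|\ge 2$ your argument needs --- and by a case analysis chasing forced orientations around these six vertices shows that $G$ contains no directed path of length three. From that it deduces that every vertex of $U$ is a pure source or a pure sink with respect to $V$. A tell-tale sign that your interpretation is not the intended one: under your reading the lemma would already follow for $n\ge 4$, so the hypothesis $n\ge 7$ would be inexplicably slack; under the paper's reading the three-per-side requirement is exactly what drives it.
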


\begin{proof}

We will show that $G$ cannot contain any directed path of length three. Fix vertices $1,2,3 \in U$ and $a,b,c \in V$ and assume that $(1,a), (a,2), (2,b) \in E$. If $(1,b) \in E$, then that would be a bad edge because of the path $1a2b$, so $(b,1) \in E$. Now assume that $(3,b) \in E$. Then we have the path $3b1a$, so $(a,3) \in E$. If $(c,2) \in E$, then we have the path $c2b1$, so $(1,c) \in E$. But then $(a,2)$ is bad, since we have the path $a3b1c2$. So suppose that $(2,c) \in E$. Then we have the path $1a2c$, so $(c,1) \in E$. But then $(2,b)$ is bad, since we have the path $2c1a3b$. Therefore, we must have $(b,3) \in E$, and by symmetry $(a,3) \in E$ as well.

Now assume that $(3,c) \in E$. We have the path $1a3c$, so $(c,1) \in E$. But then $(b,1)$ is bad, since we have the path $b3c1$. Therefore, we have $(c,3) \in E$. If $(c,1) \in E$, then $(c,3)$ is bad, since we have the path $c1a3$. Therefore, $(1,c) \in E$. If $(2,c) \in E$, then $1a2c$ is a path and thus $(1,c)$ is bad. Therefore, $(c,2) \in E$. But then $(c,3)$ is bad, since we have the path $c2b3$.

We have shown that $G$ has no paths of length three. If there are no paths of length two, then either every edge has direction from $U$ to $V$ or every edge has direction from $V$ to $U$. Now suppose there is a path of length two, say $(1,x), (x,2) \in E$, where $1,2 \in U$ and $x \in V$. Since there is no path of length three, we have $(1,v),(v,2) \in E$ for every $v \in V$. Now let $3 \in U$ be any other vertex. Suppose there exist $y,z \in V$ such that $(3,y),(z,3) \in E$. Then $z3y2$ is a path of length three, a contradiction. Therefore, for any $u \in U$, either $(u,v) \in E$ for all $v \in V$ or $(v,u) \in E$ for all $v \in V$.
\end{proof}

\begin{proof}[Proof of Theorem \ref{thm:main}.]
Note that $\lfloor n^2/4 \rfloor \leq 2n-2$ for $1 \leq n \leq 7$. The statement of the theorem is clearly true in the cases $n=1,2$. Let $3 \leq n \leq 7$ and suppose $f(n-2)=2n-6$ and $f(n-1)=2n-4$. Then, using Lemma \ref{main_lemma}, we deduce that $f(n) \leq 2n-2$. Now observe that any tree where all the edges are double is irredundant and has exactly $2n-2$ edges. Therefore, $f(n)=2n-2$ for $n \leq 7$.

We now observe that $f(n) \geq \lfloor n^2/4 \rfloor$ for all $n$. Indeed, let $G$ be a bipartite graph with parts $U$ and $V$ of sizes $\lfloor n/2 \rfloor$ and $\lceil n/2 \rceil$ respectively. Let $(u,v) \in E(G)$ if and only if $u \in U$ and $v \in V$. Then $e(G)=\lfloor n^2/4 \rfloor$ and $G$ is irredundant, as it contains no directed path of length two. Note that $f(7)=12=\lfloor 7^2/4 \rfloor$. Also, Lemma \ref{main_lemma} gives $f(8) \leq 16$ and thus $f(8)=16=\lfloor 8^2/4 \rfloor$, with equality only being possible if our graph has no double edges and no triangles (using the proof of Lemma \ref{main_lemma}).

Let $n \geq 7$ and suppose $f(n)=\lfloor n^2/4 \rfloor$ and $f(n+1)=\lfloor (n+1)^2/4 \rfloor$. Then Lemma \ref{main_lemma} gives $f(n+2) \leq \max \{\lfloor (n+1)^2/4 \rfloor +2, \lfloor n^2/4 \rfloor +3, \lfloor (n+2)^2/4 \rfloor \}$. But this maximum is equal to $\lfloor (n+2)^2/4 \rfloor$ for any $n \geq 7$. Again, the proof of Lemma \ref{main_lemma} shows that equality is only possible if our graph has no double edges and no triangles.

For $n=2$, it is clear that equality holds if and only if our graph is a double tree. Also, it is clear that any double tree is irredundant, for all values of $n$. Suppose that, for irredundant graphs with at most $n-1$ vertices, where $n \leq 6$, equality holds precisely for double trees. Let $G$ be an irredundant graph on $n$ vertices with $e(G)=f(n)=2n-2$. Using the proof of Lemma \ref{main_lemma}, we deduce that $G$ must contain some double edge, say $uv$. Construct $G'$ as in proof of Lemma \ref{main_lemma}, by contracting $uv$ into a single vertex $x$. Then $e(G')=2n-4=f(n-1)$ and thus $G'$ is a double tree. Let $y \in V(G')$ be any vertex other than $x$. If $yx$ is a double edge in $G'$, then either $yu$ or $yv$ was a double edge in $G$, as any other possibility would contradict $G$ being irredundant. Therefore, every edge in $G$ is double. Also, since $G$ is irredundant, it cannot contain any cycles. Therefore, $G$ is a double tree.

The above also shows that any maximum irredundant graph on seven vertices, which contains some double edge, has to be a double tree.

For $n>7$, the proof of Lemma \ref{main_lemma} shows that all the maximum irredundant graphs must contain no double edges and no triangles. Therefore, they must be the complete bipartite graphs given by Mantel's theorem. The same is also true for graphs on seven vertices that contain no double edges. Combining this with Lemma \ref{lemma_simple} shows that these graphs must be simple.
\end{proof}

\section*{Acknowledgements}
The author would like to thank his PhD supervisor, David Ellis, for his guidance and very useful comments, as well as Edward Crane and Erin Russell for suggesting this problem.

\bibliographystyle{plain}
\bibliography{references.bib}

\begin{thebibliography}{1}

\bibitem{Mantel}
W.~Mantel.
\newblock Vraagstuk {XXVIII}.
\newblock {\em Wiskundige Opgaven}, 10:60--61, 1907.

\end{thebibliography}

\end{document}